\documentclass[
framed_theorems,
framed_proofs,
print,
]{OCPreprint}

\usepackage{enumitem}

\renewcommand\d{\mathop{}\!\mathrm{d}}

\addbibresource{references.bib}

\title[Non-existence of perturbed solutions under a SSC]
{Non-existence of perturbed solutions under a second-order sufficient condition}
\author{Gerd Wachsmuth\footnote{%
		Brandenburgische Technische Universität Cottbus--Senftenberg,
		Institute of Mathematics,
		03046 Cottbus,
		Germany,
		\email{wachsmuth@b-tu.de},
		\url{https://www.b-tu.de/fg-optimale-steuerung/team/prof-gerd-wachsmuth},
		ORCID: 0000-0002-3098-1503%
	}
}
{
	\makeatletter
	\def\and{ and }
	\def\footnote#1{}
	\hypersetup{
		pdftitle={\@title},
		pdfauthor={\@author}
	}
	\makeatother
}

\begin{document}
\maketitle
\begin{abstract}
	We present an optimization problem in infinite dimensions
	which satisfies the usual second-order sufficient condition
	but for which perturbed problems fail to possess solutions.
\end{abstract}

\begin{keywords}
	perturbation analysis,
	stability of solutions,
	second-order sufficient conditions
\end{keywords}

\begin{msc}
	\mscLink{49K40}
\end{msc}

\section{Introduction}
For nonlinear optimization problems in finite dimensions,
it is well known
that uniqueness of the Lagrange multiplier
together with the standard second-order sufficient condition (SSC)
leads to the stability of the solution
under perturbations of the problem
and this is a key result
for the analysis of SQP methods,
see
\cite[Proposition~6.3]{Bonnans1994}
and
\cite[Theorem~4.20]{Robinson1982}.
For infinite-dimensional problems,
such a result is not known
and one typically resorts
to a strong second-order sufficient condition
for the analysis of SQP-type methods,
see, e.g., \cite[(2.17)]{Malanowski1992} and 
\cite[(2.3)]{Alt1990}.
In this note,
we present an example which satisfies
the usual second-order sufficient conditions
but for which perturbed problems
fail to possess solutions.

\section{Second-order sufficient conditions and perturbations}
For convenience,
we state the standard SSC
for a problem of type
\begin{equation*}
	\text{Minimize}
	\quad
	f(x)
	\quad\text{s.t.\ } x \in C,
\end{equation*}
where
$X$ is a Banach space,
$f \colon X \to \R$
is twice continuously Fréchet differentiable on $X$
and
$C \subset X$ is closed and convex.
Due to the simple structure of the constraint,
Robinson's constraint qualification
is trivially satisfied by the above problem
and we have the following
result,
see \cite[Theorem~3.63(i), Remark~3.68]{BonnansShapiro2000}.
\begin{theorem}
	\label{thm:SSC}
	Let $\bar x \in C$, $\beta,\eta > 0$ be given,
	such that
	the conditions
	\begin{subequations}
		\label{eq:SSC}
		\begin{align}
			\label{eq:SSC:1}
			-f'(\bar x) &\in \NN_C(\bar x)
			\\
			\label{eq:SSC:2}
			f''(\bar x) h^2
			&\ge
			\beta \norm{h}_X^2
			\qquad
			\forall
			h \in \hat C_\eta(\bar x)
		\end{align}
	\end{subequations}
	hold.
	Then, a quadratic growth condition is valid at $\bar x$,
	i.e., there exist
	$\delta,\varepsilon > 0$
	such that
	\begin{equation}
		\label{eq:growth}
		f(x) \ge f(\bar x) + \frac{\delta}{2} \norm{x - \bar x}_X^2
		\qquad
		\forall x \in C, \norm{x - \bar x}_X \le \varepsilon.
	\end{equation}
\end{theorem}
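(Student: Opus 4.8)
The plan is to argue by contradiction. If the quadratic growth condition fails, then negating the conclusion (which asserts the existence of $\delta,\varepsilon$) produces, for every index $n$, a point $x_n \in C$ with $0 < \norm{x_n - \bar x}_X \le 1/n$ and $f(x_n) < f(\bar x) + \frac{1}{2n}\norm{x_n - \bar x}_X^2$. Writing $t_n := \norm{x_n - \bar x}_X \to 0$ and $h_n := (x_n - \bar x)/t_n$, so that $\norm{h_n}_X = 1$, I would perform a second-order Taylor expansion of $f$ at $\bar x$. Since $f$ is $C^2$, the remainder is $o(t_n^2)$ uniformly over unit directions, and the growth failure becomes
\[
	t_n\, f'(\bar x) h_n + \tfrac{t_n^2}{2} f''(\bar x) h_n^2 + o(t_n^2) < \tfrac{1}{2n} t_n^2 .
\]

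Next I would exploit the first-order condition \eqref{eq:SSC:1}. Because $C$ is convex, $-f'(\bar x)\in \NN_C(\bar x)$ is equivalent to $f'(\bar x)(x-\bar x)\ge 0$ for all $x\in C$, so in particular $f'(\bar x) h_n \ge 0$. Inserting this nonnegativity and the boundedness of the bilinear form $f''(\bar x)$ into the displayed inequality shows first that $f'(\bar x) h_n = O(t_n) \to 0$, and then, after discarding the nonnegative first-order term and dividing by $t_n^2/2$, that $\limsup_n f''(\bar x) h_n^2 \le 0$.

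The decisive step is to locate the directions $h_n$ inside the approximate critical cone $\hat C_\eta(\bar x)$ so that \eqref{eq:SSC:2} applies. Since $x_n\in C$ and $\bar x\in C$, the radial direction $h_n=(x_n-\bar x)/t_n$ lies in the tangent cone $T_C(\bar x)$ exactly, and I have just shown $f'(\bar x)h_n\to 0$; hence the defining inequalities of $\hat C_\eta(\bar x)$ hold with room to spare for all large $n$. Then \eqref{eq:SSC:2} gives $f''(\bar x)h_n^2\ge\beta\norm{h_n}_X^2=\beta>0$, contradicting $\limsup_n f''(\bar x)h_n^2\le 0$, and the contradiction establishes \eqref{eq:growth}.

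I expect the main obstacle to be precisely this last step. In finite dimensions one would extract a convergent subsequence $h_n\to h$, land in the genuine critical cone, and invoke a second-order condition at the single limiting direction $h$; in infinite dimensions no such compactness is available and the normalized directions need not converge, even weakly, to anything useful. This is exactly why \eqref{eq:SSC:2} is imposed uniformly, with a single constant $\beta$, over the whole enlarged cone $\hat C_\eta(\bar x)$: it permits invoking coercivity directly along the sequence $h_n$ without passing to a limit. I would also take care that the Taylor remainder is genuinely uniform over unit directions, which follows from the continuity of $f''$ at $\bar x$.
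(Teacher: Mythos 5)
Your proof is correct, and it matches the standard argument behind the cited result: the paper itself gives no proof of this theorem (it refers to Bonnans--Shapiro), and the proof there is precisely your contradiction argument — normalize $h_n = (x_n - \bar x)/\norm{x_n - \bar x}_X$, use \eqref{eq:SSC:1} and the uniform Taylor remainder to show $f'(\bar x) h_n \to 0$ and $\limsup_n f''(\bar x) h_n^2 \le 0$, place $h_n$ in $\hat C_\eta(\bar x)$ for large $n$, and contradict \eqref{eq:SSC:2}. Your closing remark is also exactly right: the uniform coercivity constant $\beta$ over the enlarged cone $\hat C_\eta(\bar x)$ is what replaces the compactness-based extraction of a limiting critical direction available only in finite dimensions.
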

Here, $\TT_C(\bar x)$ and $\NN_C(\bar x)$ are
the tangent cone
and the normal cone in the sense of convex analysis,
respectively,
and
\begin{equation*}
	\hat C_\eta(\bar x)
	:=
	\set{
		h \in \TT_C(\bar x)
		\given
		f'(\bar x) h \le \eta \norm{h}_X
	}
\end{equation*}
is the so-called approximate critical cone.
The role of the Lagrange multiplier
is played by $-f'(\bar x)$
and this element is uniquely determined (for fixed $\bar x$).

In finite dimensions,
the assertion of \cref{thm:SSC}
also gives stability w.r.t.\ perturbations.
As an example, we give a simple result with linear perturbations.
\begin{proposition}
	\label{prop:perturbation}
	Suppose that $X$ is finite dimensional
	and that
	$\bar x \in C$, $\delta,\varepsilon > 0$
	satisfy \eqref{eq:growth}.
	Then, for any $x\dualspace \in X\dualspace$
	with
	$\norm{x\dualspace}_{X\dualspace} < \delta \varepsilon / 2$
	the perturbed problem
	\begin{equation*}
		\text{Minimize}
		\quad
		f(x) + \dual{x\dualspace}{x}_X
		\quad\text{s.t.\ } x \in C
	\end{equation*}
	has a local solution $z$ satisfying
	$\norm{z - \bar x}_X \le 2 \delta^{-1} \norm{x\dualspace}_{X\dualspace}$.
\end{proposition}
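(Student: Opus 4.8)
The plan is to localize the perturbed problem to a compact set, exploit finite dimensionality to obtain a minimizer there, and then use the growth condition \eqref{eq:growth} to show that this minimizer lies strictly inside the localization region, so that it is in fact a local solution of the perturbed problem over all of $C$.

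Concretely, I would write $g(x) := f(x) + \dual{x\dualspace}{x}_X$ for the perturbed objective and consider the set $B := \set{x \in C \given \norm{x - \bar x}_X \le \varepsilon}$. Since $X$ is finite dimensional and $C$ is closed, $B$ is compact, and as $g$ is continuous it attains its minimum over $B$ at some point $z$. The first key step is to estimate $\norm{z - \bar x}_X$. Because $\bar x \in B$ and $z$ minimizes $g$ over $B$, we have $g(z) \le g(\bar x)$, which rearranges to $f(z) - f(\bar x) \le \dual{x\dualspace}{\bar x - z}_X \le \norm{x\dualspace}_{X\dualspace}\,\norm{z - \bar x}_X$. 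On the other hand, $z \in C$ with $\norm{z - \bar x}_X \le \varepsilon$, so \eqref{eq:growth} gives $f(z) - f(\bar x) \ge \tfrac{\delta}{2}\norm{z - \bar x}_X^2$. Combining the two inequalities and dividing by $\norm{z - \bar x}_X$ (the case $z = \bar x$ being trivial) yields the claimed bound $\norm{z - \bar x}_X \le 2\delta^{-1}\norm{x\dualspace}_{X\dualspace}$.

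The second step is to upgrade $z$ from a minimizer over $B$ to a local solution of the perturbed problem. The hypothesis $\norm{x\dualspace}_{X\dualspace} < \delta\varepsilon/2$ together with the bound just derived gives $\norm{z - \bar x}_X < \varepsilon$, i.e.\ $z$ lies in the open ball of radius $\varepsilon$ about $\bar x$. Setting $r := \varepsilon - \norm{z - \bar x}_X > 0$, every $x \in C$ with $\norm{x - z}_X < r$ satisfies $\norm{x - \bar x}_X < \varepsilon$ by the triangle inequality, hence belongs to $B$; since $z$ minimizes $g$ over $B$, it minimizes $g$ over this neighbourhood of $z$ in $C$ and is therefore a local solution.

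I do not anticipate a genuine obstacle: the only points requiring care are that compactness, and hence existence of $z$, relies on the finite dimensionality of $X$, and that the strict inequality $\norm{z - \bar x}_X < \varepsilon$ keeps $z$ away from the artificial boundary $\norm{x - \bar x}_X = \varepsilon$. This boundary argument is the crux, since without it $z$ would only be a minimizer of $g$ constrained to $B$ rather than a local solution of the perturbed problem over $C$.
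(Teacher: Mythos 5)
Your proposal is correct and follows essentially the same route as the paper: localize to the compact set $C \cap B_\varepsilon(\bar x)$, obtain a minimizer $z$ there by compactness, combine its optimality with the growth condition \eqref{eq:growth} to get $\norm{z - \bar x}_X \le 2\delta^{-1}\norm{x\dualspace}_{X\dualspace} < \varepsilon$, and conclude that the ball constraint is inactive so $z$ solves the perturbed problem locally. Your explicit triangle-inequality argument for the last step just spells out what the paper states as ``the constraint $z \in B_\varepsilon(\bar x)$ is not binding.''
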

\begin{proof}
	We consider the localized problem
	\begin{equation*}
		\text{Minimize}
		\quad
		f(x) + \dual{x\dualspace}{x}_X
		\quad\text{s.t.\ } x \in C \cap B_\varepsilon(\bar x).
	\end{equation*}
	Since the objective is continuous
	and since the feasible set is compact,
	there exists a local solution $z$.
	The optimality of $z$ together with the growth condition \eqref{eq:growth}
	implies
	\begin{align*}
		\frac\delta2 \norm{z - \bar x}_X^2
		&\le
		f(z) - f(\bar x)
		=
		\parens*{
			f(z) + \dual{x\dualspace}{z}_X
		}
		-
		\parens*{
			f(\bar x) + \dual{x\dualspace}{\bar x}_X
		}
		+
		\dual{x\dualspace}{ \bar x - z }_X
		\\&\le
		\norm{x\dualspace}_{X\dualspace}
		\norm{z - \bar x}_X
		.
	\end{align*}
	Thus,
	$\norm{ z - \bar x }_X \le 2 \delta^{-1} \norm{x\dualspace}_{X\dualspace} < \varepsilon$.
	This shows the desired inequality
	and since the constraint $z \in B_\varepsilon(\bar x)$ is not binding,
	$z$ is a local solution of the perturbed problem.
\end{proof}

\section{Counterexample}
We present a counterexample which shows that
\cref{prop:perturbation}
fails in infinite dimensions.
We define the operator $S \colon L^2(0,1) \to L^2(0,1)$
via
$(S u)(x) = \int_0^x u(\xi) \d\xi$.
We further define
\begin{align*}
	X &:= \R \times L^2(0,1),
	\qquad
	C := \set[\big]{ (t,u) \in X \given \abs{u} \le t \text{ a.e. on } (0,1) }, \\
	f_h(t,u) &:= t^2 + \norm{S u}_{L^2(0,1)}^2 - \frac12 \norm{u}_{L^2(0,1)}^2 - h t
	\qquad \forall (t,u) \in X
	,
\end{align*}
where $h \ge 0$ is a linear perturbation parameter
and $h = 0$ corresponds to the unperturbed problem.
It is clear that
$X$ is a Banach space, $C \subset X$ is closed and convex
and $f_h$ is twice Fréchet differentiable with
\begin{equation*}
	\dual{f_h'(\bar t, \bar u)}{(t,u)}_X
	=
	2 \bar t t + 2 \dual{S \bar u}{S u}_{L^2(0,1)} - \dual{\bar u}{u}_{L^2(0,1)} - h t
\end{equation*}
and
\begin{equation*}
	f_h''(\bar t, \bar u) (t, u)^2
	=
	2 t^2 + 2 \norm{S u}_{L^2(0,1)}^2 - \norm{u}_{L^2(0,1)}^2
	.
\end{equation*}

We start by checking that
the point
$(\bar t, \bar u) = (0,0)$
satisfies the SSC from \cref{thm:SSC} for the unperturbed objective $f_0$.
It is clear that $(0,0) \in C$
and $f_0'(0,0) = 0 \in \NN_C(0,0)$.
Since $C$ is a closed, convex cone, we have
$\TT_C(0,0) = C$,
thus,
the approximate critical cone is given by
$\hat C_\eta(0,0) = C$.
It remains to verify \eqref{eq:SSC:2}.
Let $(t,u) \in C$ be arbitrary.
Then, $\abs{u} \le t$ implies $\abs{ (S u)(x) } \le t x$.
Thus,
\begin{align*}
	f_0''(0,0)
	(t,u)^2
	&=
	2 t^2 + 2 \norm{S u}_{L^2(0,1)}^2 - \norm{u}_{L^2(0,1)}^2
	\\&
	\ge
	2 t^2 - 2 \int_0^1 (t x)^2 \d x - \int_0^1 t^2 \d x
	=
	\parens*{ 2 - 2/3 - 1 } t^2
	=
	\frac13 t^2
	\\&\ge
	\frac16 \parens*{t^2 + \norm{u}_{L^2(0,1)}^2 }
	=
	\frac16 \norm{(t,u)}_X^2
\end{align*}
and this verifies \eqref{eq:SSC:2} with $\beta = 1/6$.
Thus, $(\bar t, \bar u) = (0,0)$
is a local minimizer of $f_0$ on $C$
and one can show that it is even a global minimizer on $C$.

It remains to compute the minimizers of $f_h$ on $C$.
Thus,
let $h \ge 0$ be given and let $(\tilde t, \tilde u) \in C$
be a local minimizer of $f_h$ on $C$.

First, we consider the case $\tilde t > 0$.
Then, $\tilde u$ is a minimizer of
\begin{equation*}
	\text{Minimize} 
	\quad
	\norm{S u}_{L^2(0,1)}^2 - \frac12 \norm{u}_{L^2(0,1)}^2
	\quad\text{s.t.\ }
	-\tilde t \le u \le \tilde t
	\text{ a.e.\ on $(0,1)$}.
\end{equation*}
Now, Pontryagin's maximum principle implies that
$\tilde u(x)$ solves
\begin{equation*}
	\text{Minimize}
	\quad
2 (S\adjoint S \tilde u)(x) u - \frac12 u^2
\quad
\text{s.t.\ } u \in [-\tilde t, \tilde t]
\end{equation*}
for almost every $x \in (0,1)$.
Here, $S\adjoint S \tilde u$ is given by
$(S\adjoint S\tilde u)(x) = \int_x^1 (S \tilde u)(\xi) \d\xi $.
In particular, we get the implications
\begin{align*}
	(S\adjoint S\tilde u)(x) &> 0
	\quad\Rightarrow\quad
	\tilde u(x) = -\tilde t
	\\
	(S\adjoint S\tilde u)(x) &< 0
	\quad\Rightarrow\quad
	\tilde u(x) = \tilde t.
\end{align*}
We argue that $S\adjoint S \tilde u = 0$.
Let $x \in [0,1)$ be a maximizer of the continuous function $S\adjoint S \tilde u$.
Towards a contradiction, suppose that
$(S\adjoint S \tilde u)(x) > 0$.
Then, we get $\tilde u = -\tilde t$ in a neighborhood of $x$
and this implies
\begin{equation*}
	(S\adjoint S \tilde u)'(\xi) = -(S \tilde u)(\xi)
	\quad\text{and}\quad
	(S\adjoint S \tilde u)''(\xi) = -\tilde u(\xi) = \tilde t > 0.
\end{equation*}
for all $\xi$ in a neighborhood of $x$.
Together with $(S \tilde u)(0) = 0$,
this contradicts the assumption that $x$ is a maximizer of $S\adjoint S \tilde u$.
Hence,
$S\adjoint S \tilde u \le 0$.
Similarly, we can show
$S\adjoint S \tilde u \ge 0$.
Thus,
$S\adjoint S \tilde u = 0$
and this gives
$\tilde u = 0$
which is, obviously, not a minimizer.
Hence, every local minimizer $(\tilde t, \tilde u)$ satisfies $\tilde t = 0$
and, consequently, $\tilde u = 0$.
This shows that $\tilde t = 0$ is a local minimizer of
\begin{equation*}
	\text{Minimize}
	\quad
	t^2 - h t
	\quad\text{s.t.\ }
	t \ge 0.
\end{equation*}
This implies $h = 0$.

To summarize,
$(0,0)$ is the only (local) minimizer of $f_0$ on $C$
and satisfies the second-order sufficient conditions.
However, the linearly perturbed functionals
$f_h$, $h > 0$, do not admit local minimizers on $C$.


\renewcommand*{\bibfont}{\small}
\printbibliography

\end{document}